\theoremstyle{plain}
\newtheorem{theorem}{Theorem}[section]
\newtheorem{lemma}[theorem]{Lemma}
\newtheorem{corollary}[theorem]{Corollary}
\numberwithin{equation}{section}
\theoremstyle{definition}
\newtheorem{definition}[theorem]{Definition}
\newtheorem{example}[theorem]{Example}
\newtheorem{proposition}[theorem]{Proposition}
\newtheorem{remark}[theorem]{Remark}
\newcommand{\R}{\mathbb{R}}
\newcommand{\Z}{\mathbb{Z}}
\newcommand{\lra}{\longrightarrow}
\def\blfootnote{\xdef\@thefnmark{}\@footnotetext}\makeatother
\numberwithin{equation}{section}
\def\quasitoric{toric }
\def \({\left(}
\def \){\right)}
\def \<{\langle}
\def \>{\rangle}
\def \tensor{\otimes}
\def \bb{\mathbb}
\def \RR{{\bb{R}}}
\def \ZZ{{\bb{Z}}}
\def \begineq{\begin{equation}}
\def \endeq{\end{equation}}
\numberwithin{equation}{section}
\begin{document}

\title[Equivariant K-theory of toric orbifolds]{Equivariant K-theory
  of toric orbifolds}

\author[S. Sarkar]{Soumen Sarkar}
\address{Department of Mathematics, Indian Institute of Technology Madras, India}
\email{soumensarkar20@gmail.com}

\author[V. Uma]{V. Uma}
\address{Department of Mathematics, Indian Institute of Technology Madras, India}
\email{vuma@iitm.ac.in}

%

\subjclass[2010]{Primary 55N15, 55N22, 55N91, 14M25; Secondary 55N10, 52B11}

\keywords{toric orbifold, quasitoric orbifold, toric variety, projective toric variety, 
piecewise polynomial, piecewise Laurent polynomial}

\date{\today}
\dedicatory{}
\maketitle

\begin{abstract}
  Toric orbifolds are a topological generalization of projective toric
  varieties associated to simplicial fans.  We introduce some
  sufficient conditions on the combinatorial data associated to a
  toric orbifold to ensure the existence of an invariant cell
  structure on it and call such a toric orbifold {\it retractable}. In
  this paper, our main goal is to study equivariant cohomology
  theories of retractable toric orbifolds. Our results extend the
  corresponding results on {\it divisive} weighted projective spaces.
 \end{abstract}

\tableofcontents

\section{Introduction}
The notion of a {\it toric orbifold} was introduced by Davis and
Januskiewicz in \cite[Section 7]{DJ}. It was later studied by Poddar
and the first author in \cite{PS} who call it a ``quasitoric
orbifold''. Loosely speaking a toric orbifold $X$ is an orbifold
admitting an effective action by a compact torus
$T\cong (\mathbb{S}^1)^n$ with orbit space a simple convex polytope
$Q$. A toric orbifold can alternately be constructed from $Q$ and the
data encoded by a $\mathbb{Z}^n$-valued function $\lambda$ on the set
of codimension-one faces of $Q$ (see Definition
\ref{def_characteristic_function}). In particular, when $\lambda$
satisfies the $(*)$ condition in \cite[p. 423]{DJ}, the toric orbifold
is smooth and is called a {\it toric manifold}, also known as a
``quasitoric manifold'' (see \cite[Section 5.2]{BP}). A toric manifold
is known to admit a canonical $T$-invariant cell structure given by
means of a height function on $Q$ \cite[Theorem 3.1]{DJ}. A torus
manifold is an even-dimensional manifold acted on by a
half-dimensional torus with non-empty fixed point set and some
additional orientation data (see \cite{hm, mp}). The orbit space of a
torus manifold has a rich combinatorial structure, e.g., it is a
manifold with corners provided that the action is locally
standard. These are generalizations of toric manifolds. But unlike a
toric manifold, in general, a torus manifold does not come equipped
with an invariant cellular structure, unless we impose some additional
combinatorial conditions on it (see for example \cite[Lemma
2.2]{u}).

In this paper, we give some sufficient conditions for a possibly
singular toric orbifold to admit a $T$-invariant cell structure. We do
this by using the concept of retraction sequence of the simple
polytope $Q$ which was introduced in \cite{BSS}. We call a toric
orbifold satisfying the above sufficiency condition a {\it retractable
  toric orbifold}. Our definition of a retractable toric orbifold was
motivated by the invariant cell structure on a {\it divisive} weighted
projective space described by Harada, Holm, Ray and Williams in
\cite[Proposition 2.7, Corollary 2.9]{HHRW}. Indeed, the divisive
weighted projective space is a particular example of a retractable toric
orbifold which has the simplex as the quotient polytope (see Example
\ref{dwp}).

Let $pt$ denote the $1$-point space with the trivial $T$-action.  The
equivariant projection $X\longrightarrow pt$ induces the structure of
a graded $E_{T}^*(pt)$-algebra structure on $E_{T}^*(X)$, where
$E_{T}^*$ is any generalized $T$-equivariant cohomology theory. We
describe $E_T^*(X)$ as an algebra over $E_T^*(pt)$ when $X$ is a
retractable toric orbifold (see Corollary \ref{gkm_rt_algebra}). In
particular, we get a description of the $T$-equivariant $K$-theory
ring of $X$ over $K_{T}^*(pt)\cong R(T)[z,z^{-1}]$, where
$R(T)=K_{T}^0(pt)$ denotes the ring of finite dimensional complex
representations of $T$ and $z$ denotes the Bott periodicity element in
$K^{-2}(pt)$. This result generalizes the corresponding result on a
divisive weighted projective space in \cite[Proposition
3.10]{HHRW}. As in \cite{HHRW}, our main tools here are the methods
developed by Harada, Henriques and Holm in \cite{HHH}.  For a
topological group $G$, they prove a GKM-type theorem for the
$G$-equivariant generalized cohomology theory of a $G$-space equipped
with a $G$-invariant stratification satisfying some additional
conditions \cite[Section 3]{HHH}. In Proposition \ref{divass}, we show
that a retractable toric orbifold $X$ has a $T$-invariant stratification
satisfying these conditions.

In Section 4, we introduce the notion of \emph{piecewise algebra} for a
characteristic pair $(Q, \lambda)$, see Definition \ref{piec_alg}, in
a dual manner to the concept of piecewise algebra associated to a fan
(see \cite[Section 4]{HHRW}). In our main result, Theorem
\ref{piecewise}, we prove that the generalized equivariant cohomology
theory ring of a retractable toric orbifold is isomorphic as an
$E_T^*(pt)$-algebra to the corresponding piecewise algebra for
$(Q, \lambda)$. This result extends the corresponding result for a
divisive weighted projective space in \cite[Theorem 5.5]{HHRW} to the
larger class of retractable toric orbifolds (see Example
\ref{eg_div_toric_3d}).

In particular, we show that the equivariant integral cohomology ring
$H_{T}^*(X)$ is isomorphic to the piecewise polynomial functions on
$(Q, \lambda)$, the equivariant topological $K$-theory ring
$K_{T}^*(X)$ is isomorphic to the piecewise Laurent polynomial
functions on $(Q,\lambda)$, and the equivariant complex cobordism ring
$MU_{T}^*(X)$ is isomorphic to piecewise cobordism forms on
$(Q,\lambda)$.

Note that examples of toric orbifolds include simplicial projective
toric varieties which correspond to simplicial polytopal fans. Thus
Corollary \ref{gkm_rt_algebra} and Theorem \ref{piecewise} give
generalizations of the corresponding results on the toric variety
associated to a smooth polytopal fan in \cite[Theorem 7.1]{HHRW} and
\cite[Corollary 7.2]{HHRW} respectively, to toric varieties associated
to simplicial polytopal fans which have the underlying structure of a
retractable toric orbifold under the action of the compact torus.

We refer to \cite{May} for the definitions and results on $T$-equivariant
generalized cohomology theories $E_{T}^*$, \cite{Segal} for
$T$-equivariant $K$-theory $K_{T}^*$ and \cite{TD} and \cite{Sinha}
for $T$-equivariant complex cobordism theory $MU^*_{T}$.

The paper is organized as follows. We recall some basics of toric
orbifolds, the concept of local groups, and the concept of retraction
of simple polytopes in Sections
\ref{subsec_def_by_construction}--\ref{subsec_char_subsp},
\ref{subsec_def_local_groups}, and \ref{sec_ret_of_simple_poly}
respectively.  We introduce the notion of ``retractable toric orbifolds''
in Section \ref{subsec_retractable_toric}. We discuss GKM-theory of
retractable toric orbifolds in Section \ref{sec_gkm-theory_toric_orb}
(see Proposition \ref{divass} and Corollary \ref{gkm_rt_algebra}). The
concept of ``piecewise algebras'' for a characteristic pair is
introduced in Section \ref{sec_piecewise_alg}.  We prove our main
result in Theorem \ref{piecewise}.
 
\section{Cell-structure of \quasitoric orbifolds}\label{cw-structure}
In this section we briefly recall the concept of a
\emph{characteristic pair} $(Q,\lambda)$ from \cite{DJ} and \cite{PS},
and explain how it is used to construct a \emph{\quasitoric orbifold}
$X:=X(Q,\lambda)$. The main goal of this section is to introduce some
sufficient conditions on a toric orbifold $X$ which is singular to
have a cell structure. To complete this goal, we recall three
additional concepts, namely the \emph{characteristic subspaces} of
$X$, the {\it local groups} corresponding to a face and a vertex of
it, and the {\it retraction} of the simple polytope $Q$.


\subsection{Definition of toric orbifolds by construction}\label{subsec_def_by_construction}
In this subsection, we briefly recall the constructive definition of (quasi)toric 
orbifolds following \cite{DJ} and \cite{PS}. Let $M$ be a submodule of
$\ZZ^n \subset \RR^n$ over  $\Z$, $M_{\RR}:= M \tensor_{\ZZ} \RR$ and
 $T_M := M_{\RR}/M$. Then we have the natural inclusions 
$f \colon M_{\RR} \to \ZZ^n \otimes_\Z \RR = \RR^n$ 
and $f_{\ast}\colon T_{M}  \to \RR^n/M$. Note that the inclusion
$i \colon M \to \ZZ^n$ induces a group homomorphism  
$$i_\ast \colon (\Z^n \otimes_\Z \R) / M \to (\Z^n \otimes_\Z \R) / \Z^n,$$
defined by $i_{\ast}(a + M) = a + \ZZ^n$. Then Ker$(i_{\ast}) \cong \ZZ^n/M$. 
The range space is the $n$-dimensional standard torus. We denote this torus by $T$. 
Let $f_{M}$ be the composition $i_{\ast} \circ f_{\ast}\colon T_{M} \to T$.
 If the rank of $M$ is $n$, then the map $f_{M} \colon T_{M} \to T$ is a surjective
homomorphism with kernel $G_{M} =\ZZ^n/M$, a finite abelian group.

Let $Q$ be an $n$-dimensional simple convex polytope in $\RR^n$ and 
$$\mathcal{F}(Q)= \{F_i : i \in \{ 1, \ldots, d \} = I\}$$ 
be the codimension-one faces (facets) of $Q$.
\begin{definition}\label{def_characteristic_function}
  A function $\lambda \colon \mathcal{F}(Q) \to \ZZ^n$ is called a
  {\em characteristic function} on $Q$ if
  $\lambda(F_{i_1}), \ldots, \lambda(F_{i_k})$ are linearly
  independent primitive vectors whenever the intersection
  $F_{i_1} \cap \cdots \cap F_{i_k}$ is nonempty. Then
  $\lambda_i := \lambda(F_i)$ is called the {\em characteristic vector}
  corresponding to the facet $F_i$.  The pair $(Q, \lambda)$ is called
  {\em a characteristic pair}.
\end{definition}

We remark here that in the above definition it suffices for $\lambda$
to satisfy the linear independency at each vertex which is an
intersection of $n$ facets. An example of a characteristic function is
given in Figure \ref{fig-eg1}.

Let $F$ be a codimension-$k$ face of $Q$. Since $Q$ is a simple
polytope, $F$ is the unique intersection of $k$ facets
$F_{i_1}, \ldots, F_{i_k}$.  Let $M(F)$ be the submodule of $\ZZ^n$
generated by the characteristic vectors
$\{\lambda_{i_1}, \dots, \lambda_{i_k} \}$. Then,
$T_{M(F)} = M(F)_{\RR} /M(F)$ is a torus of dimension $k$. We shall
adopt the convention that $T_{M(Q)} = 1$.  Let
\begin{equation}\label{def_tf}
T_F = \text{Im}\{f_{M(F)} \colon T_{M(F)} \to T\}
\end{equation}
Define an equivalence relation $\sim$ on the product $T \times Q$ by
\begin{equation}\label{equ001}
 	(t, x) \sim (s, y) ~ \mbox{if and only if}~ 
	 x=y~ \mbox{and}~ s^{-1}t \in T_F     
\end{equation}
where $F$ is the smallest face containing $x$. The quotient space 
$$ X(Q, \lambda)=(T \times Q)/\sim$$
has an orbifold structure with a natural $T$-action induced by 
the group operation, see Section 2 in \cite{PS}. Clearly, the orbit space
of $T$-action on $X(Q, \lambda)$ is $Q$. Let 
$$\pi \colon X(Q, \lambda) \to Q ~ \mbox{defined by} ~  \pi([p,t]_{\sim}) = p$$
be the orbit map. The space $X(Q, \lambda)$ is called the {\em (quasi)toric orbifold} 
associated to the characteristic pair $(Q, \lambda)$. 

We note that if, in addition, $\lambda$ satisfies the Davis and
Januszkiewicz's condition $(*)$ in \cite[p. 423]{DJ}, namely that the
primitive vectors $\lambda(F_{i_1}), \ldots, \lambda(F_{i_k})$ can be
extended to form a basis of $\mathbb{Z}^n$ whenever the intersection
$F_{i_1} \cap \cdots \cap F_{i_k}$ is nonempty, then $X$ is a smooth
manifold called a {\em (quasi)toric manifold}.

After analyzing the orbifold structure of $X(Q, \lambda)$, in
\cite[Subsection $2.2$]{PS}, Poddar and the first author also gave an
axiomatic definition of (quasi)toric orbifolds, which generalizes the
axiomatic definition of toric manifolds of \cite[Section 1]{DJ}.  In
\cite[Section 2]{PS}, the authors give explicit orbifold charts (in
the sense of \cite[Section 1.1]{ALR}) of $X(Q, \lambda)$.

\begin{remark}
  One can alternately construct a toric orbifold as the quotient of a
  {\em moment angle complex} by the action of a torus determined by
  the characteristic function, see \cite[Chapter 6]{BP} for the
  arguments.
\end{remark}


\subsection{Invariant and characteristic subspaces}\label{subsec_char_subsp}
We now describe certain closed invariant subspaces of a toric
orbifold $X(Q, \lambda)$. Let $F$ be a face of $Q$ of codimension
$k$. Then, the pre-image $\pi^{-1}(F)$ is a closed invariant
subspace. Indeed with the subspace topology, $\pi^{-1}(F)$ is a toric
orbifold of dimension $2n-2k$.  The corresponding characteristic pair
for $\pi^{-1}(F)$ can be described as follows.

Let
\begin{equation}
M^*(F)= M(F)_\R \cap \Z^n ~~{\rm and}~~ G_F = M^*(F)/M(F).
\end{equation}
Here, $M(F) \subseteq M^*(F)$ and both are free $\Z$-modules of rank $k$,
therefore $G_F$ is a finite abelian group. Note that if $F$ is a face of $F'$,
 then the natural inclusion of $M(F')$ into $M(F)$ induces a surjective 
 homomorphism from $G_{F'}$ to $G_{F}$. Moreover, since $M^\ast(F)$ is a
 free $\Z$-module of rank $k$, one may identify $M^\ast(F)$ with $\Z^k$
 by fixing a proper isomorphism.
 
Consider the following projection homomorphism$\colon$
\begin{equation}\label{eq_lattice_projection}
	\varrho_F \colon \ZZ^n \to  \Z^n / M^\ast(F)\cong \Z^{n-k}.
\end{equation}
Let $\{H_{1}, \ldots, H_{\ell}\}$ be the facets of $F$.  Then for each
$j \in \{1, \ldots, \ell\}$, there is a unique facet $F_{i_j}$ of $Q$
such that $H_j = F \cap F_{i_j}$. We define a map
\begin{equation}\label{charface}\lambda_{F} \colon \{H_1, \ldots, H_\ell \} \to \Z^{n-k} \text{ by }\end{equation}  
$$\lambda_{F}(H_j) =\mathit{prim}(\varrho_F \circ \lambda(F_{i_j}))
~\mbox{for}~j \in \{1, \ldots, \ell  \},$$
where $\mathit{prim}(x)$ denotes the primitive vector of $x$ in
$\ZZ^{n-k}$.  Then $\lambda_F$ is a characteristic
function on $F$.  Let $X(F, \lambda_F)$ be the toric orbifold
corresponding to the characteristic pair $(F, \lambda_F)$. Then
$$X(F,\lambda_F)=(T^{n-k}\times F) \big/ \sim_{\lambda_{F}},$$
where the equivalence relation $\sim_{\lambda_{F}}$ is defined similar
to \eqref{equ001}. Further, by \cite[Proposition 3.2]{BSS}, as
topological spaces, $\pi^{-1}(F)$ and $X(F, \lambda_F)$ are
homeomorphic.

In particular, if $F_i$ is a facet of $Q$, the subspace $\pi^{-1}(F_i)$ is
called a {\em characteristic subspace} of $X(Q, \lambda)$.

\subsection{Toric orbifolds and local groups}\label{subsec_def_local_groups}
In this subsection, we define the {\em local groups} $G_{F}(v)$ of
$X(Q, \lambda)$ corresponding to a codimension-$k$ face $F$ of $Q$
containing a vertex $v$. There is a bijective
correspondence between the fixed points of the $T^{n-k}$-action on the
toric orbifold $\pi^{-1}(F)$ and the vertices of $F$.  Let $v$
be a vertex of $Q$. Then $v=F_{i_1} \cap \cdots \cap F_{i_n}$ for a
unique collection of facets $F_{i_1}, \ldots, F_{i_n}$ of $Q$.  Let
$M(v)$ be the submodule of $\ZZ^n$ generated by
$\{\lambda(F_{i_1}), \ldots, \lambda(F_{i_n})\}$ where $\lambda$ is
as in Definition \ref{def_characteristic_function}.  Also, we have
$v=H_{j_1} \cap \cdots \cap H_{j_{n-k}}$ for a unique collection of
facets $H_{j_1}, \ldots, H_{j_{n-k}}$ of $F$. Let $M_{F}(v)$ be the
submodule of $ \Z^{n-k}$ generated by
$\{\lambda_{F}(H_{j_1}), \ldots, \lambda_{F}(H_{j_{n-k}})\}$ where
$\lambda_{F}$ is defined in (\ref{charface}).  We define
\begin{align*}
  G_{Q}(v)&:=\ZZ^n/M(v), \\
  G_{F}(v)&:= \Z^{n-k} /M_{F}(v).
\end{align*}
These are finite abelian groups. Notice that the orders $|G_Q(v)|$ and
 $|G_F(v)|$ of each group
are obtained by computing the corresponding determinant. More precisely, 
\begin{align*} 
  |G_Q(v)|&=\bigm|\det\left[\begin{array}{c|c|c} \lambda(F_{i_1})
                              &\cdots&\lambda(F_{i_n}) \end{array} 
                                       \right ]\bigm|, 
  \\
  |G_F(v)|&=\bigm|\det\left[ \begin{array}{c|c|c} \lambda_F(H_{j_1})
                               &\cdots&\lambda_F(H_{j_{n-k}}) \end{array}
                                        \right]\bigm|.\end{align*} In
                                      particular, for a vertex $v$ of
                                      $Q$, $G_v(v)=\{1\}$.


                                      \subsection{Retraction sequences
                                        of simple
                                        polytopes}\label{sec_ret_of_simple_poly}

In this subsection, we recall the concept of {\it retraction sequence}
of a simple polytope which was introduced in \cite{BSS}.
  
Let $Q$ be an $n$-dimensional simple polytope with $m$ vertices.  We
now construct a sequence of triples
$\{(B_{\ell}, P_{\ell}, v_{\ell})\}$. Let $B_1 = Q=P_1$ and
$v_1 \in V(B_1)$. Suppose $(B_\ell,P_\ell,v_\ell)$ has been defined
for $1\leq \ell\leq k-1$. We define $(B_{k}, P_{k},v_{k})$ inductively
as follows:
\begin{itemize}
\item Let $B_{k}$ denote the subset of $B_{k-1}$ such that
\[ B_{k}:=\bigcup \{F~\mid~F~~\mbox{is a face of} ~~Q ~~\mbox{contained
    in} ~~B_{k-1}~~\mbox{and}~~v_{k-1}\notin V(F)\}.\] 
\item Let $v_{k}\in V(B_{k})$ be such that $v_{k}$ has a
neighbourhood $U_{k}$ in $B_{k}$ which is homeomorphic to
$\RR^{s_{k}}_{\geq}$ as a manifold with corners for some
$0 \leq s_{k} \leq \dim(B_k)$.

\item Let $P_{k}$ be the smallest face of $B_{k}$ containing
  $U_{k}$. Hence $U_{k}$ is obtained from $P_{k}$ by deleting all its
  faces which do not contain $v_{k}$.
\end{itemize}

The sequence $\{(B_{\ell}, P_{\ell}, v_{\ell})\} $ stops if there is no
vertex of $B_{\ell}$ which has a neighbourhood in $B_{\ell}$ that is
homeomorphic to $\RR^{s_{\ell}}_{\geq}$ as a manifold with corners
for some $0 \leq s_{\ell} \leq \dim(B_{\ell})$.  Proceeding in this
way, if we get that $B_m$ is the vertex $v_m$, we set
$P_m=\{v_m\}=B_m$. At this point we introduce the following
definition.

\begin{definition}
  Let $Q$ be a simple polytope. If there exists a sequence
  $\{(B_{\ell}, P_{\ell}, v_{\ell})\}_{\ell=1}^m $ as constructed above
  such that $B_m=P_m=v_m$ is a vertex, then we say that
  $\{(B_{\ell}, P_{\ell}, v_{\ell})\}_{\ell = 1}^m$ is a {\em retraction
    sequence} of $Q$ starting with the vertex $v_1$ and ending at
  $v_m$.
\end{definition}
In relation to the above, we also recall the following definition.

\begin{definition}
A vertex $v$ is called a {\em free vertex}  of $B_{\ell}$ if it has a neighbourhood
in $B_{\ell}$ which is homeomorphic to $\RR^s_{\geq}$ as manifold with corners
for some $0 \leq s \leq \dim(B_{\ell})$.
\end{definition}

\begin{remark}
\begin{enumerate}
\item In the retraction sequence, a choice of a free vertex $v_\ell$
    in $B_{\ell}$ determines $B_{\ell+1}$.
    
\item In a retraction sequence of a simple polytope $Q$, the number
    of retraction steps is $m= |V(Q)|$. In particular, it is
    independent of the choice of the retraction sequence.

\item By \cite[Proposition 2.3]{BSS}, the height function on a simple
  polytope $Q$ gives a retraction sequence of it.

  \item See Figure \ref{fig-eg5} for a retraction sequence of a $3$-prism.
  
\end{enumerate}
\end{remark}

\begin{remark}\label{dir_graph}
  Given a retraction sequence of simple polytope $Q$ one can define a
   directed graph on the 1-skeleton of $Q$ in the following
  way. Let $\{(B_i, P_i, v_i)\}_{i=1}^m$ be a retraction sequence of
  $Q$. We order the vertices of $Q$ as $v_1 < v_2 < \ldots < v_m$.  We
  assign a direction from $v_s$ to $v_r$ if there is an edge with end
  points $v_s, v_r$ and $v_s > v_r$. This directed graph has the
  property that if $s_i$ number of edges end at the vertex $v_i$ then
  $\dim P_i =s_i$.
\end{remark}



\subsection{Torus invariant cell structures on toric orbifolds}\label{subsec_retractable_toric}
In this subsection, we first define {\em retractable} toric orbifolds,
and then show the existence of a {\em genuine} invariant cell
structure on a retractable toric orbifold justifying the
terminology. (Here genuine is as opposed to the existence of a {\bf
  q}-cellular structure on any toric orbifold by \cite[Section
4]{PS}). We adhere to the notation of the previous subsections.

\begin{definition}\label{def_divisive_tor_orb} 
  Let $X$ be a toric orbifold over the simple polytope $Q$. Then
  $X$ is called {\em retractable} if $Q$ has a retraction
  $\{(B_i, P_i, v_i)\}_{i=1}^m$ such that $G_{P_i}(v_i)$ is the
  trivial group for $i=1, \ldots, m-1$.
\end{definition}

\begin{lemma}\label{cw-of-toic-orbi}
  If $X$ is a {\em retractable} toric orbifold of dimension $2n$, then $X$ has
  a cell structure with $T$-invariant cells.
\end{lemma}
\begin{proof}
  Let $\{H_1, \ldots, H_{s_i}\}$ be the facets of $P_i$ such that
  $H_1 \cap \cdots \cap H_{s_i} = v_i$. Let $U_i\simeq
  \mathbb{R}_{\geq}^{s_i}$ be the open neighbourhood of $v_i$ in
  $P_i$ obtained by deleting all faces of $P_i$ not containing the
  free vertex $v_i$. Since $G_{P_i}(v_i)$ is trivial, the collection
  of vectors $\{\lambda_{P_i}(H_1), \ldots, \lambda_{P_i}(H_{s_i})\}$
  form a basis of $\ZZ^{s_i}$. Now, following the arguments in
  \cite[Section 1.5 and Lemma 1.6]{DJ}, we have that
  $$(T^{s_i} \times U_i)/\sim _{\lambda_{P_i}}~= ~ \pi_{P_i}^{-1}(U_i)$$
  can be identified with $\mathbb{C}^{s_i}$ having the standard
  $T^{s_i}$ action. Alternately by \cite[Section 4.2]{PS},
  $\pi_{P_i}^{-1}(U_i)$ is equivariantly homeomorphic to the quotient
  of a $2s_i$-dimensional open disk in $\mathbb{R}^{2s_i}$ by the
  group $G_{P_i}(v_i)$ which is trivial by assumption, and hence
  equivariantly homeomorphic to $\mathbb{C}^{s_i}$. Now by
  \cite[Proposition 3.2]{BSS},
  $\pi_{P_i}^{-1}(U_i) \subset X(P_i, \lambda_{P_i})$ is equivariantly
  homeomorphic to $\widehat{U}_i:=\pi^{-1}(U_i) \subset X(Q, \lambda)$
  for $i = 1, \ldots, m$. Further, note that
  $Q= {\displaystyle \bigcup_{i=1}^m U_i}$. Therefore
  $X(Q, \lambda) = {\displaystyle \bigcup_{i=1}^m \widehat{U}_i}$.
  This proves the lemma.
\end{proof}

\begin{example}\label{dwp}
  The {\it divisive} weighted projective spaces of \cite[Definition
  2.2]{HHRW} are examples of {\em retractable} toric orbifolds over an
  $n$-simplex $Q$. Let $F_1,\ldots, F_{n+1}$ denote the facets of
  $Q$. The characteristic function $\lambda$ is given by
  $\lambda(F_i)=e_i$ for $1\leq i\leq n$ and
  $\lambda(F_{n+1})=(-\chi_1,-\chi_2,\ldots, -\chi_n)$ where
  $\chi_1=1$ and $\chi_i$ divides $\chi_{i+1}$ for $1\leq i\leq n-1$
  (divisive condition). Let $v_0$ denote the intersection of the
  facets $F_1,\ldots, F_n$. For $1\leq i\leq n$, let $v_i$ denote the
  vertex of $Q$ which is the intersection of the facets $F_j$ for
  $1\leq j\leq n+1$ such that $j\neq i$. The ordering of the vertices
  $v_{0}<v_1<v_2<\cdots<v_{n}$ of $Q$ by means of a generic height
  function gives a retraction sequence
  $\{(B_i,P_i,v_i)\}_{i=0}^{n}$. One can check using the divisive
  condition that the local groups satisfy $G_{P_i}(v_i)=1$ for every
  $0\leq i\leq n$. For details of this computation see \cite{Sar}.
\end{example}

\begin{example}\label{eg_div_toric_3d}
  We now give an example of a retractable toric orbifold which is not a
  divisive weighted projective space.  Consider the characteristic
  function $\lambda$ on a 3-prism $Q$ as in Figure \ref{fig-eg1}. One
  can compute that $G_Q(v_{1})=\{1\}$,
  $G_Q(v_{2})=\mathbb{Z}/3\mathbb{Z}$, $G_Q(v_{3})=\{1\}$,
  $G_Q(v_{4})=\{1\}$, $G_Q(v_{5})=\mathbb{Z}/21\mathbb{Z}$, $G_Q(v_{6})=\mathbb{Z}/7\mathbb{Z}$, where, for
  example, $G_Q(v_{1})$ is the determinant of
  $\{\lambda(F_1), \lambda(F_2), \lambda(F_3)\}$, and similarly for
  the other groups.
 
\begin{figure}[ht]
        \centerline{
           \scalebox{0.75}{
            \input{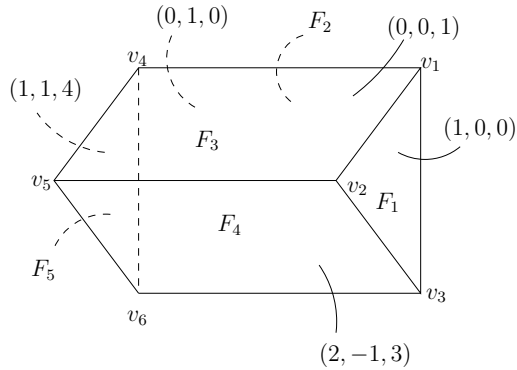}
            }
          }
 \caption{A characteristic function on 3-prism $Q$.}
 \label{fig-eg1}
 \end{figure} 
 
 Now we consider the following retraction of $Q$, see Figure
 \ref{fig-eg5}.  The retraction sequence is given by
 $(B_1, B_1, v_{1})$, $~(B_2, F_4, v_{2})$,
 $~(B_3, F_3 \cap F_4, v_{3})$, $(B_4, F_5, v_{4})$,
 $(B_5, F_4 \cap F_5, v_{5})$, $(B_6, v_6, v_{6})$.  We only compute
 the local group $G_{F_4}(v_{2})$ and the computation for other
 $G_{P_i}(v_i)$ is similar. In this case $P_i = F_4$ and $v_i = v_2$.
 So $M(F_4) = \<\lambda(F_4)\>= \<( 6, -1, 3)\>$.  Thus
 $$M^*(F_4) = \<\lambda(F_5)\> \cap \ZZ = \<(6, -1, 3)\> = M(F_4) \cong
 \ZZ.$$ Consider the basis $\{e_1=(1,0,0), e_2=(0,0,1), e_3=(6,-1,3)\}$
 of $\ZZ^3$.  Then one gets the projection
 $$\rho \colon \Z^3 \to \Z^3/M^*(F_4) = \<e_1, e_2, e_3\>/\<e_3\> \cong
 \ZZ^2.$$ The facets of $F_4$ which intersects at $v_{2}$ are
 $F_1 \cap F_4$ and $F_2 \cap F_4$. Therefore we get $\lambda_{F_4}$
 on $F_1 \cap F_4$ and $F_2 \cap F_4$ which are given by
$$\lambda_{F_4}(F_1 \cap F_4) = \mathit{prim}(\rho(\lambda(F_1)))=(1, 0)$$  and
$$\lambda_{F_4}(F_2 \cap F_4) = \mathit{prim}(\rho(\lambda(F_2)))=prim(6, 3)=(2,1).$$
Therefore
$$G_{F_4}(v_{2})= \Z^2/\<\lambda_{F_4}(F_1 \cap F_4), \lambda_{F_4}(F_2 \cap F_4)\>
=\Z^2/\<(1, 0), (2, 1)\>=\{1\}.$$

Similarly, one can compute that $G_{F_3 \cap F_4} (v_{3})=\{1\}$,
$G_{F_5}(v_{4})=\{1\}$, $G_{F_4\cap F_5}(v_{5})=\{1\}$.

\begin{figure}[ht]
        \centerline{
           \scalebox{0.45}{
            \input{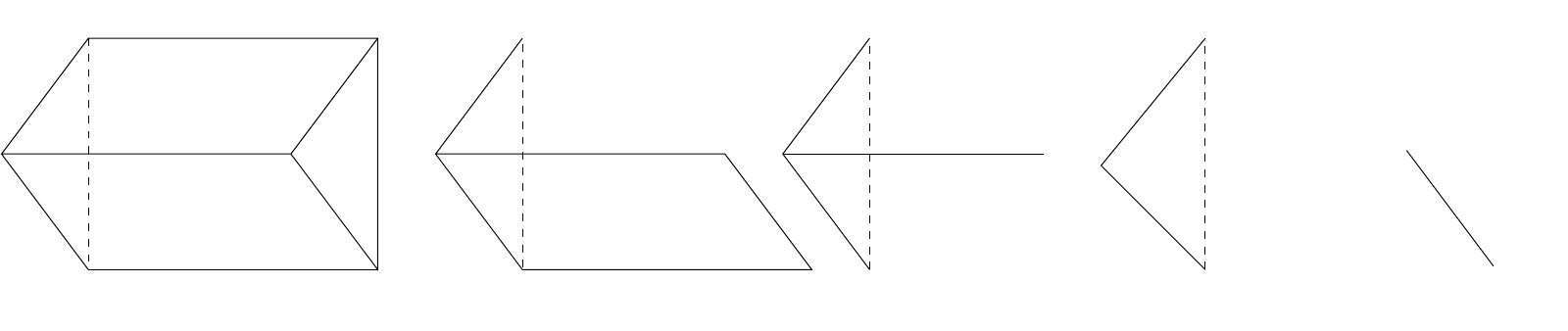_t}
            }
          }
 \caption{A retraction sequence of 3-prism $Q$.}
 \label{fig-eg5}
 \end{figure}
 \end{example}

 In the above example if instead of $\lambda(F_2)=(6,-1,3)$ we let
 $\lambda(F_2)=(2,-1,3)$ then the local groups would be
 $G_Q(v_{1})=\{1\}$, $G_Q(v_{2})=\mathbb{Z}/3\mathbb{Z}$,
 $G_Q(v_{3})=\{1\}$, $G_Q(v_{4})=\{1\}$,
 $G_Q(v_{5})=\mathbb{Z}/5\mathbb{Z}$,
 $G_Q(v_{6})=\mathbb{Z}/3\mathbb{Z}$.

 For the retraction of $Q$, given in Figure \ref{fig-eg5} and the
 corresponding retraction sequence  described above we check that the
 local group
 $G_{F_4}(v_{2})$ is not $\{1\}$. In this case we have
 $M(F_4) = \<\lambda(F_4)\>= \<( 2, -1, 3)\>$.  Thus
 $$M^*(F_4) = \<\lambda(F_5)\> \cap \ZZ = \<(2, -1, 3)\> = M(F_4) \cong
 \ZZ.$$ Consider the basis $\{e_1=(1,0,0), e_2=(0,0,1), e_3=(2,-1,3)\}$
 of $\ZZ^3$.  Then one gets the projection
 $$\rho \colon \Z^3 \to \Z^3/M^*(F_4) = \<e_1, e_2, e_3\>/\<e_3\> \cong
 \ZZ^2.$$ The facets of $F_4$ which intersects at $v_{2}$ are
 $F_1 \cap F_4$ and $F_2 \cap F_4$. Therefore we get $\lambda_{F_4}$
 on $F_1 \cap F_4$ and $F_2 \cap F_4$ which are given by
$$\lambda_{F_4}(F_1 \cap F_4) = \mathit{prim}(\rho(\lambda(F_1)))=(1, 0)$$  and
$$\lambda_{F_4}(F_2 \cap F_4) = \mathit{prim}(\rho(\lambda(F_2)))=prim(2, 3)=(2,3).$$
Therefore
$$G_{F_4}(v_{2})= \Z^2/\<\lambda_{F_4}(F_1 \cap F_4), \lambda_{F_4}(F_2 \cap F_4)\>
=\Z^2/\<(1, 0), (2, 3)\>=\mathbb{Z}/3\mathbb{Z}$$

 \begin{remark}
   Note that if one consider the retraction sequence
   $\{(B_i, P_i, v_i)\}_{i=1}^6$ as in Figure \ref{fig-kthm3}, then it
   induces a $T$-invariant CW-structure on $X(Q, \lambda)$.  Therefore
   \cite[Proposition 4.3]{HHH} can be applied to get explicit
   generators of $H^{\ast}_T(X(Q, \lambda))$ as a
   $H^{\ast}_T(pt)$-module. More generally, let $(Q, \lambda)$ be a
   characteristic pair and let $\{(B_i, P_i, v_i)\}_{i=1}^m$ be a
   retraction sequence of $Q$ obtained from a height function on $Q$
   such that $G_{P_i}(v_i)=1$ for $i=1, \ldots, m$. Then it can be
   seen that the numbers $s_i$'s in the definition of the retraction
   sequence are non-decreasing. Thus the induced $T$-invariant
   retractable structure on the toric orbifold $X(Q,\lambda)$ is in fact
   a $T$-invariant CW structure.  Consequently, one can apply
   \cite[Proposition 4.3]{HHH} to $X(Q, \lambda)$.
\begin{figure}[ht]
        \centerline{
           \scalebox{0.42}{
            \input{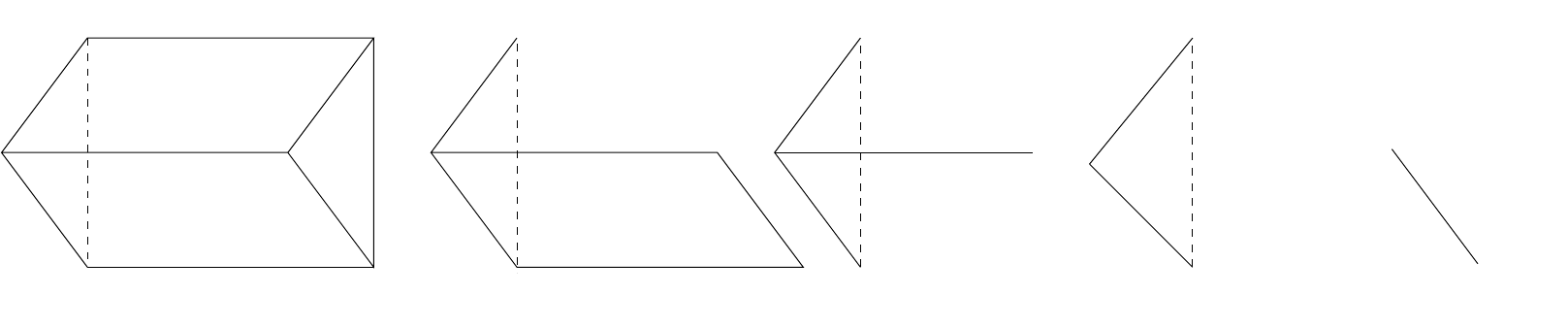_t}
            }
          }
 \caption{A retraction sequence of 3-prism $Q$.}
 \label{fig-kthm3}
 \end{figure}  
\end{remark}

\section{GKM theory on retractable toric orbifolds}\label{sec_gkm-theory_toric_orb}

We begin this section by recalling the GKM theory from \cite[Section
3]{HHH}. We shall then verify that these results can be applied to a
retractable toric orbifold and hence give a precise description of its
equivariant generalized cohomology theory ring.

Let $X$ be a $G$-space equipped with a $G$-invariant stratification
\[X_{m} \subseteq X_{m-1} \subseteq \cdots \subseteq X_1 = X.\] That
is, $X_i\setminus X_{i+1}$ has a $G$-invariant subspace ${Y}_i$ having
a $G$-stable neighbourhood which is homeomorphic to the total space
$V_i$ of a $G$-equivariant vector bundle $\rho_i=(V_i,\varpi_i,Y_i)$ with
projection $\varpi_i \colon V_i\lra Y_i$. In particular, when $Y_i=x_i$ is a
$G$-fixed point then $\rho_i=(V_i,\varpi_i,x_i)$ is a $G$-representation.

Let $E_{G}^*$ be a generalized $G$-equivariant cohomology
theory. We now make the following assumptions on $X$.

\begin{enumerate}
\item[(A1)] Each subquotient $X_i/X_{i+1}$ is homeomorphic to the Thom
  space $Th(\rho_i)$ with corresponding attaching map
  $\phi_i \colon S(\rho_i)\longrightarrow X_{i+1}$.

\item[(A2)] Every $\rho_i$ admits a $G$-equivariant direct sum
  decomposition $\displaystyle\bigoplus_{j>i} \rho_{ij}$ into $G$-equivariant
  subbundles $\rho_{ij}=(V_{ij},\varpi_{ij}, {Y}_i)$. We allow the case $V_{ij}=0$.

\item[(A3)] There exist $G$-equivariant maps  $f_{ij} \colon {Y}_i
  \longrightarrow {Y}_j$ such that the restrictions $f_{ij}\circ
  \varpi_{ij}\mid_{S(\rho_{ij})}$ and 
$\phi_i\mid_{S(\rho_{ij})}$ agree for every $j > i$. 

\item[(A4)]The equivariant Euler classes $e_G(\rho_{ij})$ for $j > i$, are not
   zero divisors and are pairwise relatively prime in $E^*_G({Y}_i)$. 
\end{enumerate}

We now recall the precise description of the generalized
$G$-equivariant cohomology ring of $X$.

\begin{theorem}\cite[Theorem 3.1]{HHH}\label{gkm}
  Let $X$ be a $G$-space satisfying the four assumptions {\rm (A1)} to
  {\rm (A4)}. Then the restriction map
\[\iota^* \colon E^*_{G}(X)\longrightarrow \prod_{i=1}^m E_{G}^*({Y}_i)\] is
monic and its image $\Gamma_{X}$ can be described as
\[\{(a_i)\in \prod_{i=1}^m E_G^*({Y}_i)~~:~~\mbox{\em for every}~ j>i,~~
e_{G}(\rho_{ij})\mid a_i-f_{ij}^*(a_j)\}.\]
\end{theorem}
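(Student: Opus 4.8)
The plan is to prove the statement by downward induction on $i$, establishing for each $i$ that the restriction $\iota_i^*\colon E^*_G(X_i)\to\prod_{k\ge i}E^*_G({\bf x}_k)$ is injective with image the ``GKM module''
$\Gamma_i=\{(a_k)_{k\ge i}: e_G(\rho_{kj})\mid a_k-f_{kj}^*(a_j)\ \text{for all}\ j>k\}$;
the case $i=1$ is the theorem. The base case $i=m$ is trivial since $X_m={\bf x}_m$. For the inductive step I would feed the inclusion $X_{i+1}\hookrightarrow X_i$ into the long exact sequence of the pair, using (A1) together with excision to identify $E^*_G(X_i,X_{i+1})\cong\widetilde E^*_G(Th(\rho_i))$, and then the Thom isomorphism (available once the $\rho_{ij}$ are $E_G$-orientable, as they are for the $K$-theory, cobordism and cohomology theories in the applications) to identify this with $E^*_G({\bf x}_i)$.

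First I would show that the sequence splits into short exact sequences $0\to E^*_G(X_i,X_{i+1})\to E^*_G(X_i)\xrightarrow{r_i}E^*_G(X_{i+1})\to 0$. The key observation is that the composite $E^*_G(X_i,X_{i+1})\to E^*_G(X_i)\to E^*_G({\bf x}_i)$, forgetting supports and then restricting to the zero section ${\bf x}_i$ of the tubular neighbourhood $D(\rho_i)$, is multiplication by the Euler class $e_G(\rho_i)=\prod_{j>i}e_G(\rho_{ij})$. Since each factor is a non-zero-divisor by (A4), so is the product, hence this composite and a fortiori the map $E^*_G(X_i,X_{i+1})\to E^*_G(X_i)$ are injective. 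By exactness the connecting homomorphism $\delta_i$ vanishes, which gives the short exact sequence and the surjectivity of $r_i$. Combined with the inductive injectivity of $\iota_{i+1}^*$ and the same non-zero-divisor argument applied to $E^*_G(X_i,X_{i+1})\cong E^*_G({\bf x}_i)$, this yields injectivity of $\iota_i^*$.

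It remains to identify the image with $\Gamma_i$. For $\operatorname{im}\iota_i^*\subseteq\Gamma_i$ I would restrict a class $\alpha\in E^*_G(X_i)$, with $a_k=\alpha|_{{\bf x}_k}$, to the sphere bundle $S(\rho_i)$: on the one hand $\alpha|_{D(\rho_i)}=\pi_i^*(a_i)$, so its restriction to $S(\rho_{ij})$ is $\pi_{ij}^*(a_i)$; on the other hand, since $S(\rho_i)$ is glued into $X_{i+1}$ via $\phi_i$, assumption (A3) gives that this same restriction equals $\pi_{ij}^*f_{ij}^*(a_j)$. Thus $\pi_{ij}^*\bigl(a_i-f_{ij}^*(a_j)\bigr)=0$, and the Gysin sequence of $\pi_{ij}$, together with $e_G(\rho_{ij})$ being a non-zero-divisor, forces $e_G(\rho_{ij})\mid a_i-f_{ij}^*(a_j)$. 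For the reverse inclusion, given $(a_k)_{k\ge i}\in\Gamma_i$ the tail $(a_k)_{k>i}$ lies in $\Gamma_{i+1}$, so by induction it lifts to $\beta\in E^*_G(X_{i+1})$; using surjectivity of $r_i$ I would pick any $\widetilde\alpha$ with $r_i(\widetilde\alpha)=\beta$ and then correct its ${\bf x}_i$-component by adding a class from $E^*_G(X_i,X_{i+1})$, which restricts to $0$ on $X_{i+1}$ and to $e_G(\rho_i)\cdot(-)$ on ${\bf x}_i$.

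The step I expect to be the crux is precisely this last correction: it needs $a_i-\widetilde\alpha|_{{\bf x}_i}$ to be divisible by the \emph{full product} $e_G(\rho_i)=\prod_{j>i}e_G(\rho_{ij})$. By the preceding paragraph both $a_i$ and $\widetilde\alpha|_{{\bf x}_i}$ satisfy the $\Gamma$-relations, so each individual $e_G(\rho_{ij})$ divides the difference; to upgrade this to divisibility by their product I must invoke the pairwise relative primality in (A4). This is the genuinely ring-theoretic point, and it is clean in a GCD domain, where pairwise coprimality makes the product a least common multiple, which then divides every common multiple --- exactly the situation for the coefficient rings $E^*_G(pt)$ relevant to $H^*$, $K^*$ and $MU^*$. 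I would therefore be explicit about the intended meaning of ``relatively prime'' in (A4) and verify this lcm property; together with the existence of the Thom isomorphism, this is where the real content lies, the remainder being formal manipulation of the long exact sequences.
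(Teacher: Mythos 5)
This theorem is quoted from \cite[Theorem 3.1]{HHH} and the paper supplies no proof of its own, so the only comparison available is with the cited source: your reconstruction --- downward induction over the stratification, the Thom isomorphism identifying $E^*_G(X_i,X_{i+1})$ with $E^*_G(\mathbf{x}_i)$ so that the forget-supports map becomes multiplication by $e_G(\rho_i)=\prod_{j>i}e_G(\rho_{ij})$, the non-zero-divisor hypothesis in (A4) splitting the long exact sequence, (A3) giving the congruences via restriction to the sphere bundles $S(\rho_{ij})$, and pairwise coprimality upgrading the individual divisibilities to divisibility by the full product in the surjectivity step --- is precisely the argument of Harada--Henriques--Holm. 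The proposal is correct and takes essentially the same approach as the cited proof.
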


We show below that the GKM theory of \cite{HHH} described above can be
applied to a retractable toric orbifold $X :=X(Q,\lambda)$.  We further
use this to give an explicit description of the $T$-equivariant
$K$-theory ring $K_T^{*}(X)$, the $T$-equivariant complex cobordism
ring $MU^*_{T}(X)$ and the $T$-equivariant integral cohomology ring
$H^*_T(X)$ (see Corollary \ref{gkm_rt_algebra}).

Let $X$ be a retractable toric orbifold and
$\{(B_i, P_i, v_i)\}_{i=1}^{m}$ be the corresponding retraction
sequence of the polytope $Q$.  By Lemma \ref{cw-of-toic-orbi} there is
a $T$-invariant retractable structure on $X$ associated to this
retraction.  Let
$$X_i:= \pi^{-1} (B_{i}), \quad \mbox{and let} \quad x_i :=
\pi^{-1}(v_i)$$ for $1 \leq i \leq m$. Then $x_1,\ldots, x_{m-1}$ and $x_m$
are the $T$-fixed points of $X$. Let
$U_i\cong \mathbb{R}_{\geq}^{s_i}$ denote the open neighbourhood of
$v_i$ in $P_i$ obtained by deleting all faces of $P_i$ not containing
the free vertex $v_i$ and let $\widehat{U}_i:=\pi^{-1}(U_i)$. Thus we
have the following $T$-invariant stratification
\begin{equation}\label{T-inv_stra}
\{x_m\} = X_{m} \subseteq X_{m-1} \subseteq \cdots \subseteq X_1 = X
\end{equation}
of $X$ such that
$X_i \setminus X_{i+1} = \widehat{U}_{i} \cong \bb C^{s_i}$, where
$s_i = \dim P_{i}$ for $i=1, \ldots, m$ with $X_{m+1}=
\emptyset$. Moreover, $x_i \in \widehat{U}_i \subseteq X_i$,
$1 \leq i \leq m$. Since $\widehat{U}_{i} \cong \mathbb{C}^{s_i}$ are
$T$-stable we have a $T$-representation $\rho_i = ({V}_i, \varpi_i, x_i)$
for $1 \leq i \leq m$. Here $T$ acts on ${V}_i:=\widehat{U}_{i}$ via
its projection to $T^{s_i}$ given by the characters
$u_{j_1},\ldots, u_{j_{s_i}}$ (described in the proof of Proposition
\ref{divass}) followed by the standard action of $T^{s_i}$ on
$\mathbb{C}^{s_i}$.

\begin{proposition}\label{divass}
  A retractable toric orbifold $X$ with the 
  $T$-invariant stratification in \eqref{T-inv_stra} satisfies assumptions (A1) to
  (A4) listed above. 
\end{proposition}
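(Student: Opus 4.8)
The plan is to verify the four conditions one at a time, using that in the notation of Theorem \ref{gkm} each ${\bf x}_i$ is the single $T$-fixed point $x_i=\pi^{-1}(v_i)$ and that $\rho_i=(V_i,\pi_i,x_i)$ is the $T$-representation $V_i=\CC^{s_i}$, with $s_i=\dim P_i$, supplied by Lemma \ref{cw-of-toic-orbi}. For (A1) I would start from the open cover $X=\bigcup_i \widehat U_i$ produced in that lemma: since $\widehat U_i=X_i\setminus X_{i+1}\cong\CC^{s_i}$ is an open $2s_i$-cell whose characteristic map $\Phi_i\colon D^{2s_i}\to X_i$ restricts to a homeomorphism on the interior and sends $\partial D^{2s_i}=S(\rho_i)$ into $X_{i+1}$, collapsing $X_{i+1}$ identifies $X_i/X_{i+1}$ with $D^{2s_i}/\partial D^{2s_i}$, i.e.\ the one-point compactification of $\CC^{s_i}$. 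As $\rho_i$ is a bundle over the point $x_i$, this compactification is exactly $Th(\rho_i)$, and $\phi_i:=\Phi_i|_{S(\rho_i)}$ is the required attaching map; this gives (A1).

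For (A2) and (A3) I would pass to the combinatorics of the retraction. By Remark \ref{dir_graph} the vertex $v_i$ receives exactly $s_i=\dim P_i$ incoming edges in the directed $1$-skeleton, and these are precisely the edges of $P_i$ at $v_i$, each joining $v_i$ to some vertex $v_j$ with $j>i$. The \daja\ chart used in Lemma \ref{cw-of-toic-orbi} furnishes $T$-equivariant linear coordinates $z_1,\dots,z_{s_i}$ on $V_i=\CC^{s_i}$, one per such edge, on each of which $T$ acts through a character $w_{ij}$; declaring $\rho_{ij}$ to be the coordinate line indexed by the edge $\overline{v_iv_j}$ (and $V_{ij}=0$ when no edge joins $v_i$ to $v_j$) yields the $T$-equivariant splitting $\rho_i=\bigoplus_{j>i}\rho_{ij}$ demanded by (A2). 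For (A3) the maps $f_{ij}\colon x_i\to x_j$ can only be the constant maps, so the content is that $\phi_i|_{S(\rho_{ij})}$ is constant equal to $x_j$. Here I would use the invariant $2$-sphere $S_{ij}:=\pi^{-1}(\overline{v_iv_j})$, a weighted $\CP^1$ with the two fixed points $x_i,x_j$: the $j$-th coordinate line of $V_i$ maps homeomorphically onto $S_{ij}\setminus\{x_j\}$ with $0\mapsto x_i$, so its bounding circle $S(\rho_{ij})$ is carried by $\phi_i$ to the remaining fixed point $x_j$. Since $f_{ij}\circ\pi_{ij}|_{S(\rho_{ij})}$ is likewise constant at $x_j$, the two agree and (A3) follows.

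For (A4) I would compute the characters $w_{ij}$. The key input is divisiveness: because $G_{P_i}(v_i)=1$, the vectors $\bar\lambda_{P_i}(H_1),\dots,\bar\lambda_{P_i}(H_{s_i})$ form a $\ZZ$-basis (as in Lemma \ref{cw-of-toic-orbi}), so the effective action on $\widehat U_i$ is the genuine linear action of the quotient torus $T/T_{P_i}$, whose character lattice is a saturated summand of $\ZZ^n$. Consequently the $w_{ij}$ are honest primitive characters of $T$ and, being the weights along the $s_i$ distinct edges at the simple vertex $v_i$, are linearly independent in $\ZZ^n$. Linear independence makes each $w_{ij}$ nonzero and any two of them non-proportional, which by the standard computations of \cite{HHH} (and \cite{HHRW}) forces the equivariant Euler classes $e_T(\rho_{ij})$ to be non-zero-divisors and pairwise relatively prime in $E_T^*(pt)$ for $E=H,K,MU$: in $H_T^*(pt)$ they are distinct primitive linear forms, in $K_T^*(pt)=R(T)$ the elements $1-t^{-w_{ij}}$, and in $MU_T^*(pt)$ the corresponding formal-group expressions. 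This establishes (A4).

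The main obstacle I anticipate is the geometric analysis behind (A3): one must check not merely that $\overline{v_iv_j}$ lies in $B_{i+1}$ but that the \emph{entire} bounding circle $S(\rho_{ij})\subset S(\rho_i)$ is collapsed by $\phi_i$ to the single point $x_j$, which requires tracking how the \daja\ characteristic map degenerates along each coordinate axis and how this interacts with the homeomorphism $\pi_{P_i}^{-1}(U_i)\cong\widehat U_i$ of \cite[Proposition 3.2]{BSS}. A secondary delicate point lies in (A4): it is exactly here that the divisive hypothesis $G_{P_i}(v_i)=1$ (rather than the possibly nontrivial $G_Q(v_i)$) must be invoked to guarantee integrality and primitivity of the $w_{ij}$, and where the coprimality of the $K$- and $MU$-theoretic Euler classes, being subtler than the cohomological case, should be reduced to the linear independence of the $w_{ij}$ through the arguments of \cite{HHH}.
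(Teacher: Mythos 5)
Your proposal is correct and follows essentially the same route as the paper: it verifies (A1) via the cell structure of Lemma \ref{cw-of-toic-orbi} and the radial collapse onto $Th(\rho_i)$, splits $\rho_i$ edgewise into characters determined by the $n-1$ facets through each edge for (A2), uses constant maps and the invariant spheres $\pi^{-1}(\overline{v_iv_j})$ for (A3), and derives (A4) from the primitivity and pairwise linear independence of the edge weights at the simple vertex $v_i$. The only cosmetic difference is that the paper constructs the attaching map explicitly through the correspondence $p\mapsto y_p$ on $\partial D\cap W_i$ rather than invoking a characteristic map abstractly, and it proves only pairwise (rather than full) linear independence of the $u_j$, which is all (A4) requires.
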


\begin{proof} {\bf Checking for (A2)}: Let
  $P_i=F_{i_1}\cap\cdots\cap F_{i_{n-s_i}}$. Consider the
  $\mathbb{Z}$-linear
  map \begin{equation}\label{char_map}\psi \colon \mathbb{Z}^n\lra
    \mathbb{Z}^{n-s_i}\end{equation} defined by the
  $(n-s_i)\times n$-matrix with rows
  $\lambda(F_{i_1}),\ldots, \lambda(F_{i_{n-s_i}})$. We note that $T$
  acts on $V_i$ via the characters $u_{j_1},\ldots, u_{j_{s_i}}$ which
  form a $\mathbb{Z}$-basis of the kernel of $\psi$. In other words,
  $V_i$ is a direct sum of the one-dimensional representations
  $V_{ij_{r}}:=\mathbb{C}_{u_{j_r}}$, $1\leq r\leq s_i$ of $T$. Recall
  that for $1\leq r\leq s_i$, there is an edge $e_{j_r}$ in the
  directed graph associated to the retraction sequence which points
  towards $v_i$. Then $u_{j_r}$ is a $\mathbb{Z}$-basis of the kernel
  of the $\mathbb{Z}$-linear map
  $$\psi_r \colon \mathbb{Z}^n\lra \mathbb{Z}^{n-1}$$ defined by an
  $(n-1)\times n$-matrix which has a row $\lambda(F)$ corresponding to
  every facet $F$ of $Q$ containing $e_{j_r}$. Note that $u_{j_r}$ is
  unique up to a sign. Since $T$ acts on the invariant subspace
  $\pi^{-1}(e_{j_r})$ via the character $u_{j_r}$, the sign of
  $u_{j_r}$ for $1\leq r\leq s_i$ is determined by the $T$-action on
  $X$.  It follows that $V_{ij_r}$ corresponds to the directed edge
  $e_{j_r}$ for $1\leq r\leq s_i$. By putting $V_{ik}=0$ for $k>i$ and
  $k\notin\{j_1,\ldots, j_{s_i}\}$ we see that {\bf (A2)} follows. We
  denote by $\rho_i$ the representation $(V_i,\varpi_i,x_i)$ and by
  $\rho_{ij_r}=(V_{ij_r},\varpi_i\mid_{V_{ij_r}}, x_i)$ the
    one-dimensional sub-representation associated to the character
    $u_{ij_r}$ for $1\leq r\leq s_i$.\\

  {\bf Checking for (A1)}: Since
  $X_{i}\setminus X_{i+1}=\widehat{U}_i=V_i$ is a complex
  $T$-representation where $T$ acts by characters $u_{ij_r}$ for
  $1\leq r\leq s_i$, it follows that $X_{i}/X_{i+1}$ is the
  representation sphere $S(\rho_i)$. Here $S(\rho_i)$ is the
  equivariant one point compactification of $V_i$ with infinity viewed
  as a fixed point. This is, therefore, nothing but the Thom space
  $Th(\rho_i)$ of the vector bundle $V_i$ over the point $x_i$. We now
  describe in detail the attaching map $$\phi_i \colon S(\rho_i)\lra X_{i+1}.$$

  Consider the neighbourhood $W_i=U_i \cap D$ of $v_i$ in $Q$ where
  $U_i$ is the open neighbourhood of $v_i$ in $P_i$ obtained by
  deleting all faces of $P_i$ not containing the free vertex $v_i$ and
  $D$ is a closed disc in $\mathbb{R}^n$ with centre $v_i$ such that
  $D$ does not contain any other vertex of $Q$. Note that $P_i$ is the
  smallest face of $Q$ containing the edges $e_{j_r}$ for
  $1\leq r\leq s_i$. The $\mbox{Link}(v_i)$ in $P_i$ is
  $P_i \setminus U_i$. So
  $P_i = \mbox{Star}(v_i) = \mbox{Link}(v_i)\star v_i$. Thus it
  follows from polyhedral geometry that for every
  $p \in (P_i \setminus U_i)$, the line segment joining $p$ and $v_i$
  meets $W_i \cap \partial D $ at a unique point $y_p$. Moreover,
  $y_p$ determines $p$ uniquely and vice versa, see
  Figure \ref{fig-a1} for an example.
\begin{figure}[ht]
  \centerline{ \scalebox{.70}{ \input{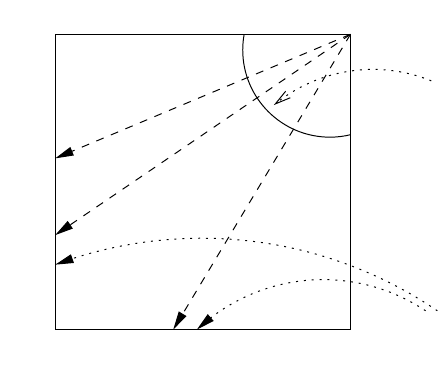_t} } }
 \caption{Correspondence of cell attaching.}
\label{fig-a1}
 \end{figure} 

This gives a bijective correspondence $g_i \colon \partial{D} \cap W_i \to P_i \setminus U_i$.
Therefore we have following commutative diagram,
\begin{equation}\label{def_attaching}
\begin{CD}
  T \times (\partial{D} \cap W_i) @>id \times g_i>> T \times (P_i \setminus U_i) @.\\
  @VVV @VVV  @. \\
  (T \times (\partial{D} \cap W_i))/\sim @>{\phi_i}>> (T \times
  (P_i \setminus U_i))/\sim @>{\subseteq}>> X_{i+1}.
\end{CD}
\end{equation} 
The map $\phi_i$ sends $[t,y_p]$ to $[t,p]$. This map is
well defined because if $y_p$ belongs to the relative interior of a face $F$
of $P_i$ then $p$ also belongs to $F$.  Moreover, under the identification of 
$\widehat{U}_i$ with the complex representation $\rho_i$,
$$\widehat{W}_i = \pi^{-1}(W_i)= (T \times W_i)/\sim ~ \subseteq \widehat{U}_i$$
can be identified with the disc bundle $D(\rho_{i})$ and
$\partial(\widehat{W}_i) = \pi^{-1}(\partial{D} \cap W_i))$ with the
sphere bundle $S(\rho_i)$ of the representation space associated with
$\rho_i$. The above identifications induce the following
homeomorphisms
$$X_i/X_{i+1} ~\cong ~ \widehat{W}_i/\partial(\widehat{W}_i) ~\cong ~
D(\rho_i)/S(\rho_i) = Th(\rho_i),$$ where $\widehat{U}_i \subseteq X_i$ maps
homeomorphically onto the interior of  $\widehat{W}_i$. This verifies
{\bf (A1)}.\\

{\bf Checking for (A3)}: Let the initial vertex of $e_{j_r}$ be
$v_{j_r}$ and let $f_{ij_r} \colon x_i \longrightarrow x_{j_r}$ denote the
constant map for $1\leq r\leq s_i$. Further,
$\pi^{-1}(e_{j_r} \setminus v_{j_r}) \subseteq \widehat{U}_i$ can be
identified with the one dimensional sub-representation $\rho_{ij_r}$
of $\rho_i$ for $r=1, \ldots, s_i$. Let $S(\rho_{ij_r})$ denote the
circle bundle associated with $\rho_{ij_r}$. Let $w_{j_r}$ be a point
where $e_{j_r}$ meets $\partial{D} \cap W_i$. Then the attaching map
$\phi_i\mid_{S(\rho_{ij_r})}$ (see \eqref{def_attaching}) sends
$\pi^{-1}(w_{j_r})$ in $S(\rho_{ij_r})$ to $x_{j_r}$. Further,
$\pi^{-1}(w_{j_r}) \in \widehat{U}_i$ is mapped to $x_i$ under the
canonical projection in the radial direction of the representation
$\rho_{ij_r}$.  It follows that the restriction of the map $\phi_i$
and the composition of the projection of $\rho_{ij_r}$ with $f_{ij_r}$
agree on $S(\rho_{ij_r})$ for every $1\leq r\leq s_i$.
This verifies assumption {\bf (A3)}.\\

{\bf Checking for (A4)}: Recall that $u_{j_r}$ for $1 \leq r\leq s_i$
are a $\mathbb{Z}$-basis for the kernel of $\psi$ (see
\ref{char_map}). In particular, for $1 \leq r\leq s_i$, $u_{j_r}$ is a
primitive non-zero vector. Hence the $K$-theoretic equivariant Euler
class \[e^{T}(\rho_{ij_r})=1-e^{-u_{j_r}}\] is a non-zero divisor in
the integral domain $K^0_T(x_i)=R(T)$. Also, $u_{j_r}$ for
$1 \leq r\leq s_i$, are pairwise linearly independent. Thus
$1-e^{-u_{j_r}}$ and $1-e^{-u_{j_{r'}}}$ are relatively prime in the
unique factorization domain $R(T)$ for $r \neq r'$ .  This can also be
seen more generally for the equivariant Euler classes in $MU^*_{T}$
and also for $H^*_{T}( ;\mathbb{Z})$ (see \cite[Lemma 5.2]{HHH}).
\end{proof}

\subsection{Equivariant generalized cohomology theory of retractable
  toric orbifolds}
In this subsection, we describe the $T$-equivariant generalized
cohomology ring of a retractable toric orbifold $X$ as an
$E_{T}^*(pt)$-algebra.

The following corollary extends \cite[Proposition 3.10]{HHRW} on a
divisive weighted projective space and \cite[Theorem 7.1]{HHRW} on the
toric variety associated with a smooth polytopal fan to any retractable
toric orbifold.  Consider
$f_{i1}^* \colon E^*_{T}(x_m)\longrightarrow E^*_{T}(x_i)$ induced by the
constant map $f_{i1} \colon x_i\lra x_m$ for $1\leq i\leq m$.  This gives
$\displaystyle\prod_{i=1}^m E^*_{T}(x_i)$ a canonical $E^*_{T}(x_m)$-algebra
structure via the inclusion defined by $(f_{i1}^*(a))$ for
$a\in E^*_{T}(x_m)$.

Let $V_{ij_r}$ denote the $1$-dimensional $T$-representation
corresponding to the primitive character $u_{j_r}\in
\mathbb{Z}^n$. When there is no edge between $v_i$ and $v_k$ for $k>i$
then $V_{ik}$ is trivial.

\begin{corollary}\label{gkm_rt_algebra}
  Let $X=X(Q,\lambda)$ be a retractable toric orbifold. The
  $T$-equivariant generalized cohomology theory ring $E_T^*(X)$, for
  $E=K, MU, H$ is isomorphic to the $E_{T}^*(x_m)$-subalgebra
\begin{equation}\label{gkm_desc_kring}
\Gamma_{X}=\{(a_i)~:~\forall~k>i,~~ e^{T}({V_{ik}})~\mbox{\em divides}~
 a_i-f^*_{ik}(a_{k}) \in E^*_{T}(x_i) \}\subseteq 
 \prod_{i=1}^m E_{T}^*(x_i).
\end{equation} (Here $H$ denotes integral cohomology.)
\end{corollary} 
\begin{proof}
  By Proposition \ref{divass} and Theorem \ref{gkm} above it follows
  that $E^*_{T}(X)$ is isomorphic to the subring $\Gamma_{X}$ of
  $\displaystyle\prod_{i=1}^m E_{T}^*(x_i)$. Note that,
  $\forall~~a\in E^*_{T}(x_1)$, $(f_{i1}^*(a))\in \Gamma_{X}$.  This
  is because for every $k>i$,
  $f^*_{i1}(a)-f^*_{ik}(f^*_{k1}(a))=f^*_{i1}(a)-f^*_{i1}(a)=0$ and is
  hence trivially divisible by $e^{T}({V_{ik}})$. Thus $\Gamma_{X}$ is
  an $E^*_{T}(x_m)$-subalgebra of
  $\displaystyle\prod_{i=1}^m E^*_{T}(x_i)$.
\end{proof}

\section{Piecewise algebra and its applications}\label{sec_piecewise_alg}
In this section, we introduce the concept of piecewise algebra
associated to the characteristic pair $(Q,\lambda)$ in a dual manner
to the notion of piecewise algebra associated to a fan (see
\cite[Section 4]{HHRW}). Consider the category $\mathrm{Face}(Q)$
whose objects are the faces $F$ of $Q$ and whose morphisms are their
inclusions $i_{F,F'} \colon F \hookrightarrow F'$.  Then
$\mathrm{Face}(Q)$ is a small category in which $Q$ is the final
object.

We have a covariant functor $\mathcal{E}_{T}^*\mathrm{-CGA}$ from
$\mathrm{Face}(Q)$ to the category of graded commutative
$E_T^*(pt)$-algebras defined by sending a face $F$ of $Q$ to
$$\mathcal{E}_{T}^*\mathrm{-CGA} (F):=E_T^*(T/T_F)$$ and the inclusion $i_{F,F'}$ to the $E_{T}^*(pt)$-algebra
morphism $i_{F,F'}^* \colon E_T^*(T/T_F)\lra E_T^*(T/T_{F'})$ induced by the
inclusion $T_{F'}\hookrightarrow T_F$ (see \cite[Definition 4.3,
4.4]{HHRW}).

\begin{definition}\label{piec_alg}
  The limit $\mbox{\em lim}~~\mathcal{E}_{T}^*\mathrm{-CGA}$ is called the
  {\em $E_{T}^*(pt)$-algebra of piecewise $E_T^*$-coefficients for the
  characteristic pair $(Q, \lambda)$} denoted by $\mathcal{P}_{E}(Q, \lambda)$.
\end{definition}

We note that here $\mathcal{P}_{E}(Q, \lambda)$ is an $E_{T}^*(pt)$-subalgebra of
$\displaystyle\prod_{F} E^*_{T}(T/T_{F})$, so every piecewise coefficient has one
component $f_{F}$ for every face $F$ of $Q$. For $1\leq i\leq m$ we have
$$\mathcal{E}^*_{T}\mathrm{-CGA} (v_i)= E_{T_{v_i}}^*(pt) = E_T^* (pt)$$
since $T_{v_i}=T$; on the other hand 
$$\mathcal{E}^*_{T}\mathrm{-CGA} (Q) = E_{T_Q}^*(pt)$$ since $T_{Q}=\{1\}$. 
Moreover, if $(f_{F})\in \mathcal{P}_{E}(Q, \lambda)$ then
$i_{F,F'}^*(f_{F})=f_{F'}$ whenever $F\subseteq F'$ in $Q$ which is
called the {\em compatibility} condition. Sums and products of
piecewise coefficients are take facewise. We have a canonical diagonal
inclusion $E_{T}^*(pt) \subseteq \mathcal{P}_{E}(Q, \lambda)$ as $(i^*_{F} (f))$, for
$f\in E_{T}^*$ where $i^*_{F} \colon E_T^*(pt) \lra E_T^*(T/T_F)$ is induced by
the projection $$T/T_{F}\lra T/T=pt$$ associated to the canonical
inclusion $T_{F}\subseteq T$, which clearly satisfies the
compatibility condition. The image of the diagonal inclusion is the
subalgebra of {\em global coefficients}. Also, the constants $(0)$ and
$(1)$ act as the zero and identity element in $\mathcal{P}_{E}(Q, \lambda)$
respectively (see \cite[Remark 4.8]{HHRW}).

Let $F$ be a face of $Q$ of codimension $n-k$. Let $F$ be the
intersection of the facets $F_1, F_2,\ldots, F_{n-k}$. Then consider
the $\mathbb{Z}$-linear map
\[\psi_{F} \colon \mathbb{Z}^n \longrightarrow \mathbb{Z}^{n-k},\] defined by
the $(n-k)\times n$-matrix with rows
$\lambda(F_1), \ldots, \lambda(F_{n-k})$. The kernel of $\psi_{F}$ is
a free $\mathbb{Z}$-module generated by the primitive vectors
$u_{1},\ldots, u_{k}$ in $\mathbb{Z}^n$. Since $u_{i}$'s are pairwise
linearly independent, $e^{T}({u_i}), 1\leq i\leq k$ are relatively
prime in $E^*_{T}(pt)$ for $E=K,MU,H$, see proof of {\bf (A4)} in
Proposition \ref{divass}.  (Here $e^{T}(u_i)\in E^*_{T}(pt)$ denotes the
$T$-equivariant Euler class of the $1$-dimensional $T$-representation
corresponding to $u_i$.) Thus as in \cite[Example 4.12]{HHRW} we have
the isomorphism
\begin{equation}\label{isom} E^*_{T}(pt)/J_{F} \cong
  E^*_{T}(T/T_{F}).
 \end{equation} 
 where $ J_{F}$ is an ideal of $E_{T}^*(pt)$ generated by
 $e^T({u_{i}}), 1 \leq i \leq k$.

 Further, if $F\subseteq F'$ then
 $\mbox{kernel}(\psi_{F})\subseteq \mbox{kernel}(\psi_{F'})$. This
 gives the inclusion of the ideals $J_{F}\subseteq J_{F'}$ of
 $E^*_{T}(pt)$ inducing the projection
$$r^*_{F,F'} \colon E^{*}_{T}(pt)/J_{F}\longrightarrow E^*_{T}(pt)/J_{F'},$$ which
corresponds to $i^*_{F,F'}$ under the identification (\ref{isom}).

The following theorem is an extension of \cite[Theorem 5.5]{HHRW} for
a divisive weighted projective space and \cite[Corollary 7.2]{HHRW}
for a toric variety corresponding to a smooth polytopal fan to any
retractable toric orbifold. The proof follows closely that of
\cite[Theorem 5.5]{HHRW} suitably adapted to this setting.

\begin{theorem}\label{piecewise}
  For a retractable toric orbifold $X=X(Q,\lambda)$,
  $E^*_{T}(X(Q,\lambda))$ is isomorphic as an $E^*_{T}(pt)$-algebra to
  $\mathcal{P}_{E}(Q, \lambda)$ for each $E=K,MU,H$. In the case when
  $E=H$, $\mathcal{P}_{E}(Q, \lambda)$ is the ring of piecewise
  polynomial functions, when $E=K$, it is the ring of piecewise
  Laurent polynomial functions and when $E=MU$, it is the ring of
  piecewise cobordism forms on $(Q,\lambda)$.
\end{theorem}

\begin{proof}
  It suffices to identify the algebra $\Gamma_{X}$ defined by
  (\ref{gkm_desc_kring}) with $\mbox{lim}~~\mathcal{E}^*_{T}\mathrm{-CGA}$. By the
  universal property of $\mbox{lim}~~\mathcal{E}^*_{T}\mathrm{-CGA}$, we first find
  compatible homomorphisms
  $$h_{F} \colon \Gamma_{X}\longrightarrow \mathcal{E}^*_{T}\mathrm{-CGA}(F)$$ for every
  face $F$ of $Q$.  If $a=(a_i)\in \Gamma_{X}$, on the vertex $v_k$
  of $Q$ we define $h_{v_k}(a):=a_k$ for each $1\leq k\leq m$. On an
  edge $e_{ij}$ joining $v_i$ and $v_j$ we let
  $$ h(a)_{e_{ij}}:=a_{i} \pmod {J_{e_{ij}}}~\in~ E^*_{T}(pt)/J_{e_{ij}}.$$ This is well defined since
  $e^{T}(u_{ij})~\mid~a_i-a_j$, where $u_{ij}$ generates
  $\mbox{kernel}(\psi_{e_{ij}})$, and $J_{e_{ij}}$ contains
  $e^{T}(u_{ij})$.  For any face $F$ of $Q$ having vertices
  $v_{i_1},\ldots, v_{i_k}$ we let
$$ h_{F}(a):=a_{i_1}\pmod {J_{F}}~\in ~E_{T}^*(pt)/J_{F}.$$ Since $u_{{ij}}$ generates
$\mbox{kernel}(\psi_{e_{ij}})\subseteq \mbox{kernel} (\psi_{F})$,
$J_{F}$ contains $e^{T}(u_{ij})$ for each edge $e_{ij}\subset F$ and
hence contains $a_i-a_j$. Further, since the $1$-skeleton of $F$ is
connected, any two vertices $v_{i_{j}}$ and $v_{i_{r}}$ are connected
by a path of edges in $F$. It follows that $a_{i_j}-a_{i_r}$ belongs
to $J_{F}$, and therefore the map $h_{F}$ is well defined for each
face $F$ of $Q$. Furthermore, since $J_{F}$ is an ideal in $E^*_{T}(pt)$,
it follows that $h_{F}$ is a homomorphism of $E^*_{T}(pt)$-algebras.

Moreover, $h_{F}$'s are compatible over $\mbox{Face}(Q)$.
This follows as $F \subseteq F'$ implies $J_{F'}$ is obtained from
$J_{F}$ by adjoining $e^{T}({u})$ for
$\displaystyle u \in \mbox{kernel}(\psi_{F'})\setminus
\mbox{kernel}(\psi_{F})$. Thus the corresponding projection
$$r^*_{F,F'} \colon E^*_{T}(pt)/J_{F}\longrightarrow E^*_{T}(pt)/J_{F'}$$ satisfies
$h_{F'}=r^*_{F,F'} \circ h_{F}$ whenever $F \subseteq
F'$. Therefore we have constructed a well defined homomorphism
$$ h \colon \Gamma_{X}\longrightarrow \mathcal{P}_E(Q, \lambda)$$ of
$E^*_{T}(pt)$-algebras.

We now conclude by showing that the map $h$ is an isomorphism. Given
$\displaystyle a\neq a' \in \Gamma_{X}\subseteq \prod_{i=1}^m
E^*_{T}(x_i)$. There exists at least one $v_i$ such that
$a_i\neq a'_i$. Thus $h_{v_i}(a)\neq h_{v_i}(a')$ in
$E^*_{T}(x_i)$. Hence $h$ is injective.  Let $(a_{F})$ be an element
in the limit $\mathcal{P}_E(Q, \lambda)$ of the functor
$\mathcal{E}^*_{T}$. Then $(a_F)$ determines $(a_i)$ in
$\displaystyle\prod_{i=1}^m E^*_{T}(x_i)$ by restricting to the
vertices $a_i:=a_{v_i}\in E^*_{T}(pt)$ for $1\leq i\leq m$. Whenever $v_i$
and $v_j$ are connected by an edge $e_{ij}$ in $Q$, we have
$$a_{e_{ij}} = r^*_{v_i, e_{ij}}(a_{v_i}) \quad \mbox{and}
 \quad a_{e_{ij}} = r^*_{v_j, e_{ij}}(a_{v_j})$$ and hence
${\displaystyle a_{i}\equiv a_{j} \pmod {J_{e_{ij}}}}$. Since
$J_{e_{ij}}$ is generated by $e^{T}({u_{ij}})$ it follows that
$a_i-a_j$ is divisible by $e^{T}({u_{ij}})$. This implies by
(\ref{gkm_desc_kring}) that $(a_i)\in \Gamma_{X}$ proving the surjectivity of
$h$.
\end{proof}

We now illustrate the above theorem by describing the
equivariant generalized cohomology ring of the retractable toric manifold
of Example \ref{eg_div_toric_3d}, as a piecewise polytopal algebra.

\begin{example} The edges $e_{ij}$ in the polytope $Q$ given in Figure
  \ref{fig-eg1} are listed below:
  \[\begin{matrix}e_{13}=F_1\cap F_3 & e_{12}=F_1\cap F_2 &
      e_{23}=F_1\cap F_4\\e_{14}=F_3\cap F_2 & e_{36}=F_3\cap F_4 &
      e_{45}=F_2\cap
                                                                    F_5\\
      e_{46}=F_3\cap F_5& e_{56}=F_4\cap F_5 &e_{25}=F_2\cap
                                               F_4 \end{matrix}\] We
                                               list below the
                                               corresponding
                                               $\psi_{e_{ij}}$ and
                                               $u_{ij}$.
                                                             \[\begin{matrix}
                                                                 \psi_{e_{13}}=\begin{bmatrix}
                                                                                      1 &0&0\\0&0&1\end{bmatrix}&
                                                                                                                         \psi_{e_{12}}=\begin{bmatrix} 1&0&0\\0&1&0\end{bmatrix}&
                                                                                                                                                                                 \psi_{e_{23}}=\begin{bmatrix} 1&~~0&0\\6&-1&3\end{bmatrix}\\\\
                                                                                                                                                                                 \psi_{e_{14}}=\begin{bmatrix} 0&0&1\\0&1&0\end{bmatrix} & \psi_{e_{36}}=\begin{bmatrix}0&~~0&1\\6&-1&3\end{bmatrix} &
                                                                                                                                                                                                                                                                                                                                                           \psi_{e_{45}}=\begin{bmatrix}0&1&0\\1&1&4\end{bmatrix}\\\\
                                                                 \psi_{e_{46}}=\begin{bmatrix} 0&0&1\\1&1&4\end{bmatrix}&
                                                                                                                         \psi_{e_{56}}=\begin{bmatrix}6&-1&3\\1&~~1&4 \end{bmatrix}
                                                          
                                                                                                                 &\psi_{e_{25}}=\begin{bmatrix}0&~~1&0\\6&-1&3\end{bmatrix}
                                                                 \end{matrix}\]
                                                             
\[  \begin{matrix}
                                      u_{13}=(0,1,0) &
                                                           u_{12}=(0,0,1)
                                                            &
                                                                 u_{23}=(0,3,1)
                                                                 \\u_{14}=(1,0,0)
                                       & u_{36}=(1,-6,0)&
                                                                 u_{45}=(-4,0,1)
                                                                 \\
                                      u_{46}=(-1,1,0)&
                                                          u_{56}=(1,3,-1)
                                                          
                                                               &u_{25}=(1,0,-2)
                                                                 \end{matrix}\]

                                                               Then
                                                               $E_{T}^*(X)$
                                                               consists
                                                               of
                                                               $(a_i
                                                               )\in
                                                               \displaystyle\prod_{i=1}^6
                                                               E_{T}^*(x_i)$
                                                               satisfying
                                                               the
                                                               following
                                                               relations:

 \[\begin{matrix} a_{1}- a_{3}&\equiv 0 \pmod {e^{T}(0,1,0)} & a_{4}- a_{6}&\equiv 0 \pmod {e^T(-1,1,0)}
\\a_{1}- a_{4} &\equiv 0\pmod {e^{T}(1,0,0)} &
                                                               a_{3}-
                                                                                             a_{6}
                                                                                             &\equiv
                                                                                             0\pmod
                                                                                             {e^T(1,-6,0)}
                                                                                                           & \\
a_{1}- a_{2} &\equiv0\pmod {e^{T}(0,0,1)}  & a_{4}- a_{5} &\equiv
                                                0\pmod {e^T(-4,0,1)}
                                                                                                           &\\
     a_{2}- a_{3}&\equiv 0 \pmod {e^{T}(0,3,1)}& a_{5}- a_{6} &\equiv
                                                0\pmod {e^T(1,3,-1)}

\\ a_{2}-
                                                                                                             a_{5}
                                                                                                             &\equiv
                                                                                                             0\pmod
                                                                                                             {e^T(1,0,-2)}&
                                                \end{matrix}\]

\end{example}

\begin{remark}
\begin{enumerate}
\item In Section 4.2 of \cite{BNSS}, the authors constructed the characteristic
pair corresponding to a polytopal simplicial complex. So one can define retractable
toric variety using this characteristic pair following Definition
 \ref{def_divisive_tor_orb}. Therefore, as a consequence we can get similar
 description of the equivariant generalized cohomology
theories for retractable toric varieties arising from polytopal simplicial complexes.\\

\item In \cite{DJ}, Toric manifolds were studied and an invariant CW-structure
of a toric manifold was constructed. So in particular, toric manifolds are
retractable toric orbifolds, and hence Theorem \ref{piecewise} holds for this 
class of manifolds. See \cite{DKU} for the description of the equivariant
$K$-ring of toric manifolds as a Stanley-Reisner ring.\\

\item In subsection 4.3 of \cite{BNSS}, the local groups $G_F(v)$ are computed
for torus orbifolds which are generalizations of toric orbifolds. So 
Definition \ref{def_divisive_tor_orb} can be introduced in this category. 
Thus one can get similar description of the equivariant generalized cohomology
theories for retractable torus orbifolds.

\item In a recent related paper \cite{HW}, the authors address the
  following question (see \cite[Question 1.4]{HW}): for which fans the
  $T$-equivariant $K$-theory ring of the associated toric variety is
  isomorphic to the ring of piecewise Laurent polynomial functions on
  the fan. In particular in \cite[Theorem 7.2]{HW} they show that fans
  with \emph{distant singular cones} satisfy this property. Our result
  Theorem \ref{piecewise} shows in particular that simplicial
  polytopal fans whose associated toric varieties are retractable
  toric orbifolds with respect to the action of the compact torus
  satisfy this property.

\item In the paper \cite{LT} the authors study symplectic toric
  orbifolds. In particular, in \cite[Section 9]{LT} they show that
  every symplectic toric orbifold has the structure of a toric variety
  associated to the fan dual to the corresponding moment polytope and
  is hence a simplicial projective toric variety. The symplectic toric
  orbifold is therefore retractable if the associated moment polytope
  admits a retraction sequence satisfying the conditions of Definition
  \ref{def_divisive_tor_orb}.

\end{enumerate}
\end{remark}
  
{\bf Acknowledgement}: The authors would like to thank Anthony Bahri,
Nigel Ray and Jongbaek Song for helpful conversations. The authors
are grateful to the referee for valuable comments and suggestions for
improvement of the manuscript.
  


\end{document}